\theoremstyle{plain}
\newtheorem{definition}[equation]{Definition}
\newtheorem{theorem}[equation]{Theorem}
\newtheorem{lemma}[equation]{Lemma}
\newtheorem{corollary}[equation]{Corollary}
\newtheorem{remark}[equation]{Remark}
\theoremstyle{definition}
\newtheorem{example}[equation]{Example}
\DeclareMathOperator{\Vol}{Vol}
\DeclareMathOperator{\Int}{int}
\DeclareMathOperator{\spanv}{span}
\newcommand{\rom}[1]{\uppercase\expandafter{\romannumeral #1\relax}}
\newcommand{\defv}[1]{\textbf{\textit{#1}}}
\begin{document}
\bibliographystyle{alpha}

\title[]{On the densest packing of polycylinders in any dimension}
\author[W\"oden Kusner]{W\"oden Kusner\\Department of Mathematics\\ University of Pittsburgh}
\thanks{Research partially supported by NSF grant 1104102.}
\maketitle

\begin{abstract}
Using transversality and a dimension reduction argument, a result of A. Bezdek and W. Kuperberg is applied to polycylinders $\mathbb{D}^2\times \mathbb{R}^n$, showing that the optimal packing density is $\pi/\sqrt{12}$ in any dimension. 
\end{abstract}



\section{Introduction}
G. Fejes T\`oth and W. Kuperberg  \cite{toth1993blichfeldt} describe a method for computing an upper bound for packings by infinite polycylinders  -- objects isometric to $\mathbb{D}^2 \times \mathbb{R}^n$ in $\mathbb{R}^{n+2}$.  This article explicitly computes their bound and then proves the sharp upper bound of $\pi/\sqrt{12}$ for the packing density of infinite polycylinders in any dimension, generalizing a result of A. Bezdek and W. Kuperberg \cite{bezdek1990maximum}.  

\subsection{Objects considered}
Open and closed Euclidean unit $n$-balls will be denoted $\mathbb{B}^n$ and $\mathbb{D}^n$ respectively. The closed unit interval is denoted $\mathbb{I}$. A general polycylinder $C$ is a set isometric to $\Pi_{i=1}^{i=m}\lambda_i\mathbb{D}^{k_i}$ in $\mathbb{R}^{ k_1+\dots + k_m}$, where $\lambda_i$ is in $[0,\infty]$.  For this article, the term polycylinder refers to the special case of an infinite polycylinder over a two-dimensional disk of unit radius.

\begin{definition}
A \defv{polycylinder} is a set isometric to\, $\mathbb{D}^2 \times \mathbb{R}^n $ in $ \mathbb{R}^{n+2}$.  
\end{definition}

The following are standard definitions.

\begin{definition}
A \defv{polycylinder packing of\, $\mathbb{R}^{n+2}$} is a countable family $$\mathscr{C} = \{C_i\}_{i \in I}$$ of polycylinders $C_i \subset \mathbb{R}^{n+2}$ with mutually disjoint interiors.
\end{definition}

\begin{definition}
The \defv{upper density} $\delta^+ (\mathscr{C})$ of a packing $\mathscr{C}$ of\, $\mathbb{R}^n$ is defined to be 
 \[ \delta^+ (\mathscr{C}) = \limsup_{r\rightarrow \infty}\frac{ \Vol(\mathscr{C}\cap r\mathbb{B}^n)}{\Vol(r\mathbb{B}^n)}.\]
  Note that this notion can be generalized further by replacing $\mathbb{B}^n$ with an arbitrary convex body $K.$
 \end{definition}
 \begin{definition}
 The \defv{upper packing density}\, $\delta^+ (C)$ of an object\, $C$ is the supremum of $\delta^+ (\mathscr{C})$ over all packings $\mathscr{C}$ of\, $\mathbb{R}^n$ by \,$C$.
\end{definition}

The definition of density is equivalent to a number of other definitions under some mild assumptions.  For more on density, see for example \cite{brass2005research, radin2004orbits, conway1999recent, federer1969geometric}.

\section{Computing a bound of Fejes T\'oth and Kuperberg }\label{comp}
 In \cite{toth1993blichfeldt}, Fejes T\'oth and Kuperberg describe a method for computing upper bounds on a broad class of objects in any dimension: Blichfeldt-type results for balls \cite{blichfeldt1929minimum, rankin1947closest} extend to results for outer parallel bodies.  Fejes T\'oth and Kuperberg compute upper bounds for the density of packings by cylinders $\mathbb{D}^{n-1} \times t\mathbb{I}$ and also for packings by outer parallel bodies of line segments, i.e. capped cylinders, in $\mathbb{R}^n$ but do not address the case of polycylinders. This section recalls that work and explicitly computes the \cite{toth1993blichfeldt} bound to be $\delta^+(\mathscr{C}) \le 0.941533\dots$ for polycylinder packings in any dimension.  
 
\subsection{Background}
Given a packing of $\mathbb{R}^n$ by congruent objects $\mathscr{C} = \{C_i\}_{i\in I}$, there are a fixed body $C \subset \mathbb{R}^n$ and isometries $\{\phi_i\}_{i\in I}$ of $\mathbb{R}^n$ such that $C_i = \phi_iC$ for all $i$ in $I$.


\begin{definition}
 A function $f:\mathbb{R}^{n} \rightarrow \mathbb{R}^+$ is a \defv{Blichfeldt gauge} for a convex body $C\subset \mathbb{R}^n$ if for any collection of isometries $\Phi=\{\phi_i\}_{i\in I}$ of $\mathbb{R}^n$ where $\mathscr{C} = \{\phi_iC\}_{i\in I}$ is a packing and for all $x$ in $\mathbb{R}^n,$
 $$\sigma_\Phi(f)(x) := \sum_{i\in I} f(\phi_i^{-1}x) \le 1.$$
\end{definition}

Notice that the characteristic function $\bold{1}_C$ of $C$ is a Blichfeldt gauge for $C$. Replacing $\bold{1}_C$ with a more general Blichfeldt gauge $f$ lets one replace the characteristic function of the packing $\bold{1}_{\mathscr{C}}$ with a diffuse version $\sigma_\Phi(f)$. This new function $\sigma_\Phi(f)$ has the same general characteristics as $\bold{1}_{\mathscr{C}}$, is still bounded pointwise by $1$ in the ambient space and is uniformly bounded independent of $\Phi$ in the moduli space of packings. As $f$ may have greater mass than $\bold{1}_C$, this allows one to estimate the volume of the interstices of a packing and thereby bound the packing density.

\begin{example}\label{exmp} Blichfeldt initially uses the radial function $2f_0$ where

\begin{displaymath}
   f_0(r) = \left\{
     \begin{array}{ll}
       \frac{1}{2}(2-r^2) & : 0 \le r \le \sqrt{2} \\
       0 & : r > \sqrt{2}
     \end{array}
   \right.
\end{displaymath}  
and showed that $f_0$ is a Blichfeldt gauge. Then, for a packing $\mathscr{C} =\{\phi_i C\}_{i\in I}$ of a cube $t\mathbb{I}^n$ by spheres, the support of $\sigma_\Phi(f)$ is contained in a slightly larger cube $(t+2\sqrt{2}-2)\mathbb{I}^n$. A bound on sphere packing density can then be extracted as follows. From the definition of the Blichfeldt gauge and integrating in spherical coordinates, one finds

$$(t+2\sqrt{2}-2)^n \ge \left\vert I \right\vert\int_{\mathbb{R}^n}\hspace*{-7pt} f_0\, \mathrm{dV} = \frac{\left\vert I \right\vert\Vol(\mathbb{B}^n)2^{\frac{n+2}{2}}}{n+2}.$$

When density is measured relative to a cube,

$$\delta^+_{}(\mathscr{C}) = \frac{\left\vert I \right\vert \Vol(\mathbb{B}^n)}{t^n} \le \frac{n+2}{2^{\frac{n+2}{2}}}\left(1+\frac{2\sqrt{2}-2}{t}\right)^n \hspace*{-7pt}.$$

It is easy to see that the same method works when $t\mathbb{I}^n$ is replaced with $\mathbb{B}^n_{t/2}$.  By passing to the limit, the bound $$\delta^+(\mathscr{C}) \le  \frac{n+2}{2^{\frac{n+2}{2}}}$$ holds for any sphere packing in $\mathbb{R}^n.$ 
\end{example}

\subsection{Blichfeldt-type bound for polycylinders}


Example \ref{exmp} motivates the following general observations.

\begin{theorem}[Blichfeldt]\label{blich} If $g$ is a Blichfeldt gauge for a body $C$, then $\delta^+(\mathscr{C})\le \Vol(C)/J(g)$ where 
$$J(g) = \int_{\mathbb{R}^n}\hspace*{-7pt}g\, \mathrm{dV}.$$
\end{theorem}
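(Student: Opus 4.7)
The strategy is to integrate the pointwise Blichfeldt inequality $\sigma_\Phi(g) \le 1$ over an expanding ball and read off the density bound by comparing the two sides, mirroring the argument already carried out for spheres in Example~\ref{exmp}.

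Fix a packing $\mathscr{C} = \{\phi_i C\}_{i \in I}$ and assume (as for Blichfeldt's $f_0$ and every gauge used in this paper) that $\mathrm{supp}(g)$ is bounded, say contained in some ball $R\mathbb{B}^n$. Integrating the defining inequality $\sigma_\Phi(g)(x)\le 1$ over $r\mathbb{B}^n$ gives
\begin{equation*}
\int_{r\mathbb{B}^n} \sigma_\Phi(g)(x)\,\mathrm{dV} \;\le\; \Vol(r\mathbb{B}^n).
\end{equation*}
On the other hand, $\int_{\mathbb{R}^n} g(\phi_i^{-1}x)\,\mathrm{dV} = J(g)$ for every $i$, so if $I_r = \{i \in I : \phi_i(\mathrm{supp}\,g)\subseteq r\mathbb{B}^n\}$ then unpacking the sum gives
\begin{equation*}
|I_r|\cdot J(g) \;=\; \sum_{i\in I_r}\int_{r\mathbb{B}^n} g(\phi_i^{-1}x)\,\mathrm{dV} \;\le\; \int_{r\mathbb{B}^n}\sigma_\Phi(g)\,\mathrm{dV} \;\le\; \Vol(r\mathbb{B}^n).
\end{equation*}

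Next I would relate $|I_r|$ to the packed volume inside $r\mathbb{B}^n$. Every body $C_i$ with $i\in I_r$ is essentially contained in $r\mathbb{B}^n$ (up to a slight enlargement by $R$ plus the diameter of $C$), while any other $C_i$ meeting $r\mathbb{B}^n$ must straddle the boundary sphere. Because the $C_i$ have mutually disjoint interiors and bounded diameter, such boundary bodies sit inside a spherical shell of bounded thickness, and a volume comparison shows there are at most $O(r^{n-1})$ of them. Consequently,
\begin{equation*}
\Vol(\mathscr{C}\cap r\mathbb{B}^n) \;\le\; |I_r|\,\Vol(C) + O(r^{n-1})\cdot\Vol(C) \;\le\; \frac{\Vol(C)}{J(g)}\,\Vol(r\mathbb{B}^n) + O(r^{n-1}).
\end{equation*}
Dividing by $\Vol(r\mathbb{B}^n)$ and passing to $\limsup_{r\to\infty}$ kills the lower-order term and yields $\delta^+(\mathscr{C})\le\Vol(C)/J(g)$. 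As $\mathscr{C}$ was arbitrary, the bound passes to $\delta^+(C)$.

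The only genuinely technical point is controlling the boundary error, which needs $g$ to be compactly supported (or at least sufficiently concentrated) so that the indices excluded from $I_r$ are a vanishing fraction of the bulk as $r\to\infty$; for the Blichfeldt-type gauges arising here, $\mathrm{supp}(g)$ is always only slightly larger than $C$, so this is transparent. The same proof goes through with any bounded convex $K$ replacing $\mathbb{B}^n$ in the definition of density.
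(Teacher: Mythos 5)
Your argument is correct and is essentially the computation the paper itself carries out (for spheres packed in a cube) in Example~\ref{exmp}; the paper states Theorem~\ref{blich} without proof, as a recalled result of Blichfeldt, so your expanding-ball version, which matches the paper's limsup definition of $\delta^+$ directly, is exactly the intended argument. The only caveat, which you already flag, is the compact-support hypothesis on $g$: for the general statement one can first replace $g$ by $g\cdot\mathbf{1}_{R\mathbb{B}^n}$ (still a Blichfeldt gauge) and let $R\to\infty$ using monotone convergence of $J(g\cdot\mathbf{1}_{R\mathbb{B}^n})$ to $J(g)$, so nothing is lost.
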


\begin{theorem}[Fejes T\'oth--Kuperberg]
If $f(\alpha)$, $\alpha \ge 0$, is a real valued function such that $f(|x|)$ is a Blichfeldt gauge for the unit ball, and $C$ is a convex body with inradius $r(C)$, then for any $\varrho \le r(C)$
$$g(x) = f\left(\frac{d(x,C_{-\varrho})}{\varrho}\right)$$
is a Blichfeldt gauge for $C$, where $C_{-\varrho}$ is the inner parallel body of $C$ at distance $\varrho$.
\end{theorem}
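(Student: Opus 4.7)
The plan is to convert the packing $\{\phi_i C\}_{i\in I}$ into an auxiliary packing of $\mathbb{R}^n$ by closed balls of radius $\varrho$, and then pull the Blichfeldt gauge inequality for the unit ball back to $C$. First I would rewrite the sum to be bounded: since each $\phi_i$ is an isometry,
$$\sigma_\Phi(g)(x) \;=\; \sum_{i\in I} f\!\left(\frac{d(\phi_i^{-1} x, C_{-\varrho})}{\varrho}\right) \;=\; \sum_{i\in I} f\!\left(\frac{d(x, \phi_i C_{-\varrho})}{\varrho}\right),$$
so the task reduces to controlling a sum of the form $\sum_i f(d_i/\varrho)$, where $d_i$ is the distance from $x$ to the nonempty closed convex set $\phi_i C_{-\varrho}$.

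The main geometric input would be the standard identity $C = C_{-\varrho} + \varrho\mathbb{D}^n$, valid because $\varrho \le r(C)$ guarantees $C_{-\varrho}$ is nonempty, together with its pointwise content: $y \in C_{-\varrho}$ if and only if $y + \varrho\mathbb{D}^n \subseteq C$. Given this, for each $i$ I would pick a nearest point $y_i \in \phi_i C_{-\varrho}$ to $x$; transporting the above characterization through $\phi_i$, the closed ball $y_i + \varrho\mathbb{D}^n$ sits inside $\phi_i C$. Since the bodies $\phi_i C$ have pairwise disjoint interiors, so do the auxiliary $\varrho$-balls $\{y_i + \varrho\mathbb{B}^n\}_{i\in I}$.

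The conclusion then drops out of the hypothesis. Rescaling by $1/\varrho$ turns $\{y_i/\varrho + \mathbb{B}^n\}_{i\in I}$ into a unit-ball packing, and evaluating the assumed Blichfeldt gauge inequality for the unit ball at the point $x/\varrho$ gives
$$\sum_{i\in I} f\!\left(\frac{|x-y_i|}{\varrho}\right) \;=\; \sum_{i\in I} f\!\left(\frac{d(x,\phi_i C_{-\varrho})}{\varrho}\right) \;\le\; 1,$$
which is precisely $\sigma_\Phi(g)(x) \le 1$. The only step that needs real care is the disjointness of the auxiliary $\varrho$-balls, and that is exactly where the hypothesis $\varrho \le r(C)$ is used via the inner-parallel-body characterization, so I do not anticipate a genuine obstacle beyond keeping the nearest-point construction and the rescaling straight.
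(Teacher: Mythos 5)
Your argument is correct, and there is nothing in the paper to compare it to: the paper states this theorem as imported background from Fejes T\'oth and Kuperberg and gives no proof. Your reduction is the natural (and essentially the original) one: use the isometry invariance of distance to rewrite $d(\phi_i^{-1}x, C_{-\varrho})$ as $d(x,\phi_i C_{-\varrho})$, pick nearest points $y_i\in\phi_i C_{-\varrho}$, observe that the balls $y_i+\varrho\mathbb{D}^n$ lie in the respective $\phi_i C$ and hence form a packing by $\varrho$-balls, rescale, and invoke the gauge hypothesis for the unit ball at $x/\varrho$. The nearest points exist because $\varrho\le r(C)$ makes $C_{-\varrho}$ a nonempty closed convex set, the translated balls are pairwise distinct since their open versions sit in disjoint interiors, and $|x-y_i|=d(x,\phi_i C_{-\varrho})$ closes the loop. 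One caveat: the ``standard identity'' $C = C_{-\varrho}+\varrho\mathbb{D}^n$ that you cite as the main geometric input is false for general convex bodies (take $C$ a triangle: the outer parallel body of the inner triangle has rounded corners). Fortunately you never use that identity; all you need is the one-directional, definitional statement $y\in C_{-\varrho}\iff y+\varrho\mathbb{D}^n\subseteq C$, which is exactly what your nearest-point step invokes. Strike the identity or weaken it to the inclusion $C_{-\varrho}+\varrho\mathbb{D}^n\subseteq C$ and the write-up is clean.
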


For their more general results, Fejes T\'oth and Kuperberg do not use $f_0$, but rather Blichfeldt's modified version.
\begin{definition} The \defv{modified Blichfeldt gauge} \cite{blichfeldt1929minimum} for\, $\mathbb{D}^n$ is the radial function
\begin{displaymath}
   f_1(r) = \left\{
     \begin{array}{ll}
       1 & : 0 \le r \le 2-\sqrt{2} \\
       \frac{1}{2}(2-r)^2 & : 2 - \sqrt{2}\le r \le 1\\
        \frac{1}{2}(2-r^2) & : 1\le r\le \sqrt{2}\\
       0 & : r > \sqrt{2}
     \end{array}
   \right.
\end{displaymath}  
\end{definition}

\begin{definition}[Fejes T\'oth--Kuperberg]For the two-dimensional gauge $f_1$ defined above, $$A_2 := J(f_1)/\Vol(\mathbb{D}^2) = (29-16\sqrt{2})/6.$$
\end{definition}

From the previous theorems and definitions, the results of \cite{toth1993blichfeldt} give a rough estimate for the density of infinite polycylinders as follows.  Consider $C(t) = \mathbb{D}^{n+2} + t\mathbb{I}^n$ in $\mathbb{R}^{n+2}$ and the gauge $g_t(x) =f_1(d(x,C(t)_{-1}))$, where $f_1$ is the modified Blichfeldt gauge and $C(t)_{-1}$ is the inner parallel body at distance $1$, i.e. an $n$-cube of height $t$. From Theorem \ref{blich} an estimate of the integral$ \int_{\mathbb{R}^{n+2}}g_t \, \mathrm{dV}$ gives a density bound. By integrating $g_t$ over $C(t)_{-1} \times \mathbb{R}^2$ and noticing that contribution from the complement $\mathbb{R}^{n+2} \smallsetminus (C(t)_{-1} \times \mathbb{R}^2)$ are of strictly lower order -- it is bounded above by a constant times the $(n-1)$-Hausdorff measure of the boundary $\partial C(t)_{-1} \subset C(t)_{-1}$, it follows that
 $$\delta^+(C(t)) \le \frac{\pi t^n}{\pi A_2 t^n + O(t^{n-1})}.$$
In the limit as $t$ goes to infinity, this gives a bound of $1/A_2 = .941533...$ for infinite polycylinders in any dimension.

\begin{remark}
The Blichfeldt gauge method yields constrained optimization problems of the form
$$\textrm{maximize } J(g)
\textrm{ subject to }\sigma_\Phi(g) \le 1 \textrm{ for all }\Phi \textrm{ where } \Phi(C) \textrm{ is a packing.}$$
\end{remark}

\section{Transversality}\label{trans}
This section introduces the required transversality arguments in affine geometry.
\begin{definition}
A \defv{d-flat} is a d-dimensional affine subspace of\, $\mathbb{R}^n$.
\end{definition}

\begin{definition}
The \defv{parallel dimension} $dim_\parallel \{F,\dots, G\}$ of a collection of flats $\{F, \dots, G\}$ is the dimension of their maximal parallel sub-flats.
\end{definition} 

The notion of parallel dimension can be interpreted in several ways, allowing a modest abuse of notation.  

For a collection of flats $\{F, \dots, G\}$, consider their tangent cones at infinity $\{F_\infty, \dots, G_\infty\}$.  The parallel dimension of $\{F, \dots, G\}$ is the dimension of the intersection of these tangent cones.  This may be viewed as the limit of a rescaling process $\mathbb{R}^n \rightarrow  r\mathbb{R}^n$ as $r$ tends to $0$, leaving only the scale-invariant information.

For a collection of flats $\{F, \dots, G\}$, consider each flat as a system of linear equations. The corresponding homogeneous equations determine a collection of linear subspaces $\{F_\infty, \dots, G_\infty\}$. The parallel dimension is the dimension of their intersection $F_\infty \,\cap \dots \cap\, G_\infty$. 

\begin{definition}
Two disjoint $d$-flats are \defv{parallel} if their parallel dimension is $d$, that is, if every line in one is parallel to a line in the other.
\end{definition}

\begin{definition}
Two disjoint d-flats are \defv{skew} if their parallel dimension is less than $d.$
\end{definition}

\begin{figure}[t]
\centering
   \begin{subfigure}{0.4\linewidth} \centering
     \includegraphics[scale=0.5]{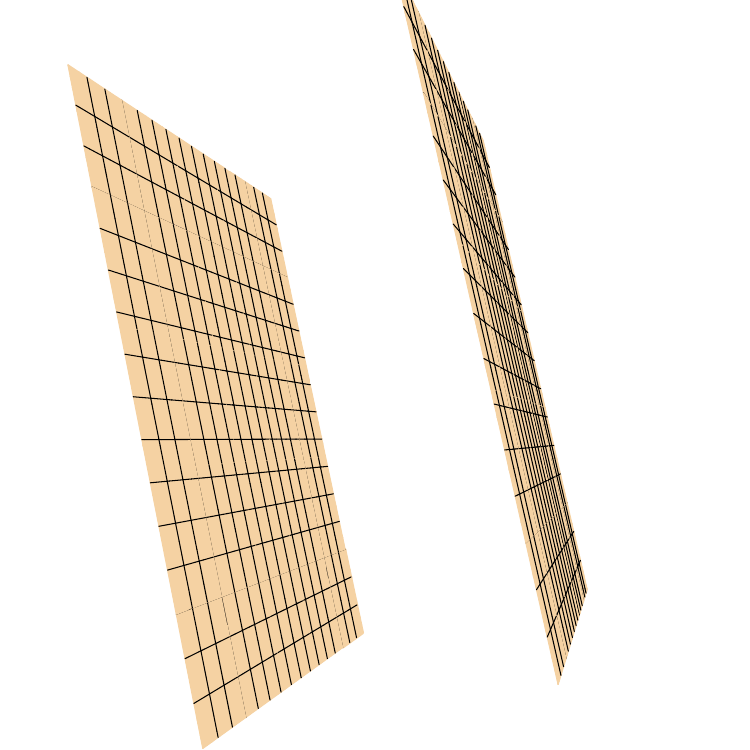}
     \subcaption*{(a)}
   \end{subfigure}
   \begin{subfigure}{0.4\linewidth} \centering
     \includegraphics[scale=0.5]{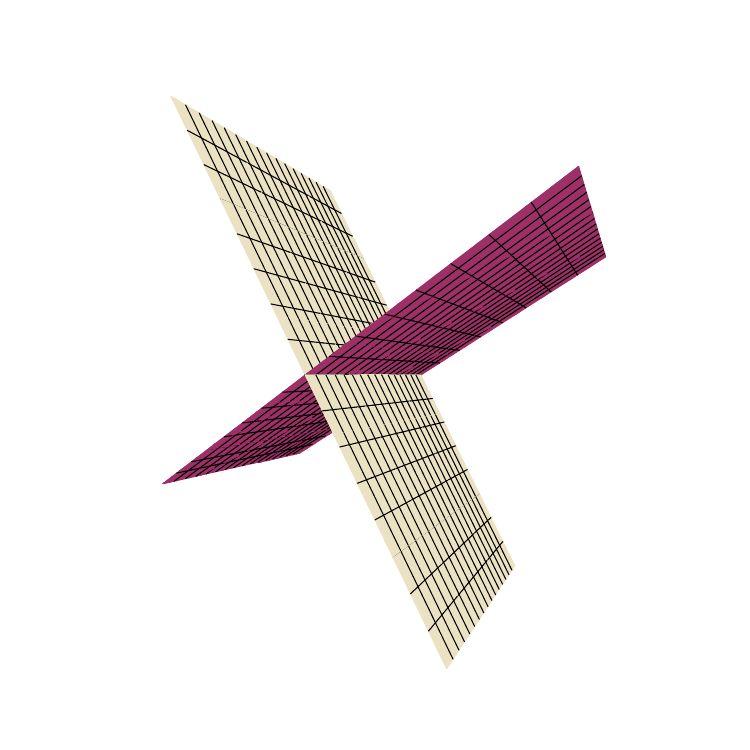}
      \subcaption*{(b)}
       \end{subfigure}
       \caption{\newline Disjoint 2-flats in $\mathbb{R}^4$ with (a) parallel dimension 2 and (b) parallel dimension 1.} \label{fig:twofigs}
\end{figure}

\begin{lemma}
A pair of disjoint $n$-flats in\, $\mathbb{R}^{n+k}$ with $n\ge k$ has parallel dimension strictly greater than $n-k.$
\end{lemma}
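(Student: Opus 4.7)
The plan is to reduce the statement to a standard dimension count on the linear subspaces at infinity, and then to use disjointness to promote a weak inequality to a strict one.

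First, I would let $F, G$ be the two disjoint $n$-flats and write $F = p + F_\infty$, $G = q + G_\infty$, where $F_\infty, G_\infty \subset \mathbb{R}^{n+k}$ are the $n$-dimensional linear subspaces associated to $F$ and $G$ as in the paper's discussion of parallel dimension. By definition,
\[
\dim_\parallel\{F,G\} = \dim(F_\infty \cap G_\infty).
\]
The standard subspace dimension formula gives
\[
\dim(F_\infty \cap G_\infty) = \dim F_\infty + \dim G_\infty - \dim(F_\infty + G_\infty) = 2n - \dim(F_\infty + G_\infty).
\]
Since $F_\infty + G_\infty \subseteq \mathbb{R}^{n+k}$, this already yields $\dim_\parallel\{F,G\} \geq n-k$, so only the strictness requires work.

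Next, I would argue by contradiction: suppose the parallel dimension equals exactly $n-k$. Then $\dim(F_\infty + G_\infty) = n+k$, so $F_\infty + G_\infty = \mathbb{R}^{n+k}$. In particular, the translation vector $q-p$ admits a decomposition $q - p = u + w$ with $u \in F_\infty$ and $w \in G_\infty$. Setting $v = -w \in G_\infty$ gives $p + u = q + v$, and this common point lies in $(p + F_\infty) \cap (q + G_\infty) = F \cap G$, contradicting the disjointness of $F$ and $G$. Hence $\dim_\parallel\{F,G\} > n-k$, as claimed.

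There is no real obstacle here — the hypothesis $n \geq k$ is only needed to ensure that the bound $n-k$ is a nonnegative integer and so carries meaningful geometric content, and the argument is essentially a one-line linear algebra observation packaged with the remark that disjoint affine subspaces cannot have their direction spaces sum to the whole ambient space. If I wanted a slicker presentation, I would phrase the disjointness condition directly as $q - p \notin F_\infty + G_\infty$, which makes the equivalence between disjointness and the failure of $F_\infty + G_\infty$ to exhaust $\mathbb{R}^{n+k}$ completely transparent.
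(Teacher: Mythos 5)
Your proof is correct and is essentially the paper's argument: both rest on the observation that disjointness forces the displacement vector between the flats to lie outside $F_\infty + G_\infty$, so that subspace is proper and the dimension formula yields $\dim(F_\infty\cap G_\infty)\ge 2n-(n+k-1)>n-k$. The paper states this directly (producing a vector $\mathbf{v}\notin F_\infty+G_\infty$) rather than by contradiction, which is exactly the ``slicker presentation'' you mention at the end, so there is no substantive difference.
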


\begin{proof}
 By homogeneity of $\mathbb{R}^{n+k}$, let $F=F_\infty.$  As $F_\infty$ and $G$ are disjoint, $G$ contains a non-trivial vector $\bold{v}$ such that $G = G_\infty +\bold{v}$ and $\bold{v}$ is not in $F_\infty + G_\infty.$  It follows that
$$dim (\mathbb{R}^{n+k}) \ge dim(F_\infty + G_\infty + \spanv(\bold{v}) )> dim(F_\infty + G_\infty) $$
$$= dim (F_\infty) + dim (G_\infty) - dim (F_\infty \cap G_\infty).$$
Count dimensions to find $n+k > n +n - dim_\parallel (F_\infty,G_\infty).$
\end{proof}
\begin{corollary}\label{dimcor}
A pair of disjoint $n$-flats in\, $\mathbb{R}^{n+2}$ has parallel dimension at least $n-1$.
\end{corollary}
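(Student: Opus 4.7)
The plan is to extract the corollary as an immediate specialization of the preceding lemma with $k = 2$, together with a trivial observation for the low-dimensional edge cases.

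First I would handle the principal range $n \ge 2$, where the hypothesis $n \ge k$ of the lemma is satisfied with $k = 2$. Applying the lemma to a pair of disjoint $n$-flats in $\mathbb{R}^{n+2}$ yields that their parallel dimension is strictly greater than $n - 2$. Since parallel dimension is a non-negative integer (it is the dimension of an intersection of linear subspaces of the tangent cone at infinity), a strict inequality $\dim_\parallel > n-2$ upgrades to $\dim_\parallel \ge n - 1$, which is the claim.

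Next I would dispose of the remaining cases $n = 0$ and $n = 1$, for which the lemma does not directly apply. For $n = 0$ the statement $\dim_\parallel \ge -1$ is vacuous, since parallel dimension is always non-negative. For $n = 1$, the claim reduces to $\dim_\parallel \ge 0$, which again holds automatically (geometrically, two disjoint lines in $\mathbb{R}^3$ are either parallel, with parallel dimension $1$, or skew, with parallel dimension $0$).

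There is no real obstacle here: the corollary is essentially a one-line consequence of the lemma, and the only thing to be careful about is the trivial verification that the edge cases $n \in \{0,1\}$ fall outside the lemma's hypothesis $n \ge k$ but satisfy the conclusion for dimensional reasons. Accordingly, the write-up can be kept to essentially a single sentence invoking the lemma with $k = 2$.
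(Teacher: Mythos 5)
Your proposal is correct and matches the paper's (implicit) argument: the corollary is exactly the lemma with $k=2$, with the strict inequality $\dim_\parallel > n-2$ upgraded to $\dim_\parallel \ge n-1$ by integrality. Your extra care with the edge cases $n\in\{0,1\}$ is harmless and sound, though in the paper's application those dimensions are handled separately by Thue and by Bezdek--Kuperberg.
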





\section{Results}

\subsection{Pairwise foliations and dimension reduction}



\begin{figure}[t]
\centering
   \begin{subfigure}{0.3\linewidth} \centering
     \includegraphics[scale=0.3]{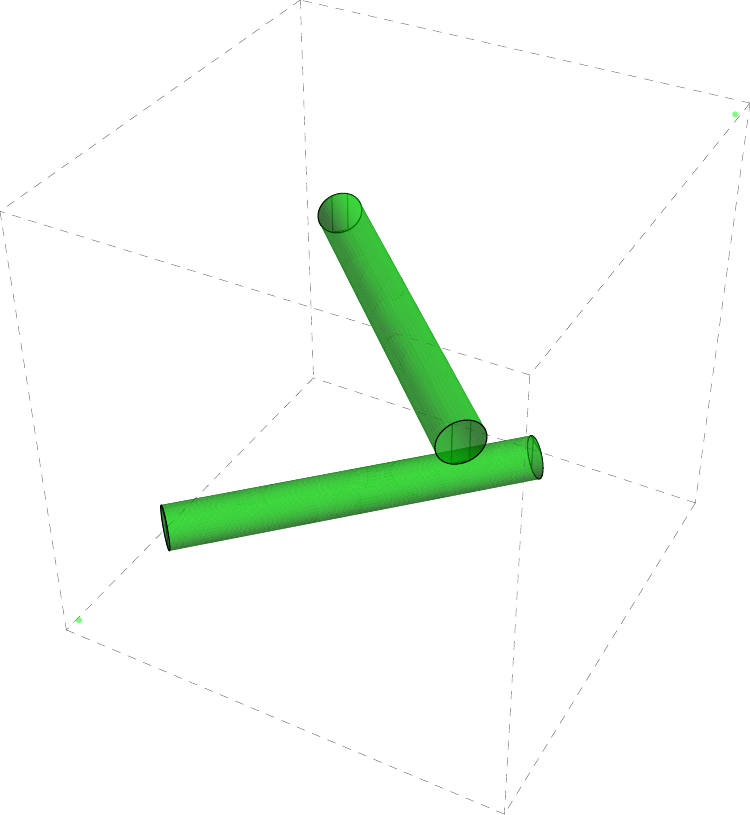}
     \subcaption*{$t=-\epsilon$}
   \end{subfigure}
   \begin{subfigure}{0.3\linewidth} \centering
     \includegraphics[scale=0.3]{Fig2.pdf}
      \subcaption*{$t=0$}
       \end{subfigure}
        \begin{subfigure}{0.3\linewidth} \centering
     \includegraphics[scale=0.3]{Fig2.pdf}
      \subcaption*{$t=\epsilon$}
       \end{subfigure}
\caption{\newline The leaves of a pair of polycylinders in $\mathbb{R}^4$ indexed by the common parallel direction $t$ are identical.} \label{fig:twofigs}
\end{figure}

\begin{definition}
The \defv{core} $a_i$ of a polycylinder $C_i$ isometric to $\mathbb{D}^2~\times~\mathbb{R}^n$ in $\mathbb{R}^{n+2}$ 
is the distinguished $n$-flat defining $C_i$ as the set of points at most distance $1$ from $a_i$. 
\end{definition}

In a packing $\mathscr{C}$ of\, $\mathbb{R}^{n+2}$ by polycylinders, Corollary \ref{dimcor} shows that, for every pair of polycylinders $C_i$ and $C_j$, one can choose parallel $(n-1)$-dimensional subflats $b_i \subset a_i$ and $b_j \subset a_j$ and define a product foliation $$\mathscr{F}^{b_i,b_j}:\mathbb{R}^{n+2} \rightarrow \mathbb{R}^{n-1} \times \mathbb{R}^3$$ with $\mathbb{R}^3$ leaves that are orthogonal to $b_i$ and to $b_j$. Given a point $x$ in $a_i$, there is a distinguished $\mathbb{R}^3$ leaf $F_x^{b_i,b_j}$ that contains the point $x$. The foliation $\mathscr{F}^{b_i,b_j}$ restricts to foliations of $C_i$ and $C_j$ with right-circular-cylinder leaves. 


\subsection{The Dirichlet slice}

\begin{definition}
In a packing $\mathscr{C}$ \hspace*{-5pt} of\, $\mathbb{R}^{n+2}$ by polycylinders, the \defv{Dirichlet cell} $D_i$ associated to a polycylinder $C_i$ is the set of points in $\mathbb{R}^{n+2}$ no further from $C_i$ than from any other polycylinder in $\mathscr{C}$.  
\end{definition}
The Dirichlet cells of a packing partition $\mathbb{R}^{n+2},$ because $C_i \subset D_i$ for all polycylinders $C_i$.  To bound the density $\delta^+(\mathscr{C})$, it is enough to fix an $i$ in $I$ and consider the density of $C_i$ in $D_i$. For the Dirichlet cell $D_i,$ there is a slicing as follows.
\begin{definition}
Given a fixed a polycylinder $C_i$ in a packing $\mathscr{C}$ \hspace*{-7pt} of\, $\mathbb{R}^{n+2}$ by polycylinders and a point $x$ on the core $a_i$, the plane $p_x$ is the $2$-flat orthogonal to $a_i$ and containing the point $x$. The \defv{Dirichlet slice} $d_x$ is the intersection of $D_i$ and $p_x.$
\end{definition}

Note that $p_x$ is a sub-flat of $F_x^{b_i,b_j}$ for all $j$ in $I.$

\subsection{Bezdek--Kuperberg bound}

For any point $x$ on the core $a_i$ of a polycylinder $C_i$, the results of Bezdek and Kuperberg \cite{bezdek1990maximum} apply to the Dirichlet slice $d_x$.

\begin{lemma}\label{lem1}
A Dirichlet slice is convex and, if bounded, a parabola-sided polygon.
\end{lemma}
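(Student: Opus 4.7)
Parametrize the plane $p_x$ by $y = x + w$ for $w \in P_x$, the linear direction of $p_x$. Since $p_x$ is orthogonal to $a_i$ at $x$, the nearest point on $a_i$ to any $y \in p_x$ is $x$ itself, so $d(y, C_i) = |w| - 1$, while for every other core, $d(y, C_j) = d(y, a_j) - 1$. The Dirichlet condition $d(y, C_i) \le d(y, C_j)$ squares to
$$f_j(w) \,:=\, d(x+w, a_j)^2 - |w|^2 \,\ge\, 0.$$
Thus $d_x$ is the intersection over $j \ne i$ of the superlevel sets $\{f_j \ge 0\}$ inside $p_x$.

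Using the identity $d(y, a_j)^2 = |\pi_{V_j^\perp}(y - z_j)|^2$ for any basepoint $z_j \in a_j$ (where $V_j$ is the linear direction of $a_j$), a direct computation identifies the quadratic part of $f_j$ in $w$ as $-|\pi_{V_j}(w)|^2$, which is negative semi-definite. Hence each $f_j$ is concave on $p_x$, each set $\{f_j \ge 0\}$ is convex, and their intersection $d_x$ is convex.

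To identify the boundary curves, observe that the rank of the quadratic part of $f_j$ equals $\dim \pi_{V_j}(P_x) = 2 - \dim(P_x \cap V_j^\perp)$. Corollary~\ref{dimcor} gives $\dim(V_i \cap V_j) \ge n-1$, equivalently $\dim(V_i^\perp \cap V_j^\perp) \ge 1$; since $P_x = V_i^\perp$, this forces the rank of the quadratic part of $f_j$ to be at most one. Two subcases result: in the parallel case $V_j = V_i$, the quadratic part vanishes and $\{f_j = 0\}$ is a line; in the skew case, the quadratic part has rank exactly $1$ and $\{f_j = 0\}$ is a parabola (degenerately, a pair of parallel lines). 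Every active side of $d_x$ is therefore a parabolic arc or a line segment.

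Assuming $d_x$ is bounded, a standard local-finiteness argument for packings shows that only finitely many $C_j$ contribute active constraints to $\partial d_x$, so $d_x$ is cut out by finitely many parabolic and linear inequalities, yielding the claimed parabola-sided polygon. The main technical step is the rank-one quadratic computation above; the rest is essentially bookkeeping once Corollary~\ref{dimcor} supplies the parallel dimension bound that rules out elliptical boundary components.
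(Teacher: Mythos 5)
Your proof is correct, but it takes a genuinely different route from the paper. The paper constructs $d_x = \cap_j d^j$, uses the product foliation $\mathscr{F}^{b_i,b_j}$ to reduce each pairwise constraint to a pair of right circular cylinders in $\mathbb{R}^3$, and then cites \cite{bezdek1990maximum} for the facts that $d^j$ is convex and its boundary arc is a parabola. You instead work directly in the plane $p_x$ and compute: writing the Dirichlet condition as $f_j(w) = d(x+w,a_j)^2 - |w|^2 \ge 0$, you show the quadratic part is $-|\pi_{V_j}(w)|^2$, so each constraint region is a superlevel set of a concave quadratic (hence convex), and Corollary~\ref{dimcor} pins the rank at $0$ or $1$, giving a line or a parabola as the bisector. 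This buys a self-contained proof that does not lean on the two-cylinder analysis of \cite{bezdek1990maximum} for this lemma, and it makes transparent exactly where the parallel-dimension bound enters (it is what rules out rank $2$, i.e.\ elliptical or hyperbolic bisectors); the cost is that you must separately justify the reduction of $d(y,C_i)\le d(y,C_j)$ to $d(y,a_i)\le d(y,a_j)$ on the region where one of the distances to a polycylinder vanishes, which is harmless in a packing but worth a sentence. One small refinement: your degenerate rank-one subcase (a pair of parallel lines) cannot actually occur. In the skew case $\spanv(e) = V_i^\perp\cap V_j^\perp$ is one-dimensional, the missing linear coefficient is $2\langle e, x - z_j\rangle$, and this vanishes precisely when $x - z_j \in V_i + V_j$, i.e.\ when the cores $a_i$ and $a_j$ intersect --- impossible for distinct members of a packing. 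So every skew side is an honest parabolic arc and every parallel side is a line segment, matching the conclusion of the lemma exactly.
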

\begin{proof}

Construct the Dirichlet slice $d_x$ as an intersection.  Define $d^j$ to be the set of points in $p_x$ no further from $C_i$ than from $C_j$.  Then the Dirichlet slice $d_x$ is realized as 
$$d_x = \{\cap_{j\in I} d^j \}.$$ 

Each arc of the boundary of $d_x$ in $p_x$ is given by an arc of the boundary of some $d^j$ in $p_x.$ The boundary of $d^j$ in $p_x$ is the set of points in $p_x$ equidistant from $C_i$ and $C_j.$ Since the foliation $\mathscr{F}^{b_i,b_j}$  is a product foliation, the arc of the boundary of $d^j$ in $p_x$ is also the set of points in $p_x$ equidistant from the leaf $C_i \cap F_x^{b_i,b_j}$ of $\mathscr{F}^{b_i,b_j}|_{C_i}$ and the leaf $C_j \cap F_x^{b_i,b_j}$ of $\mathscr{F}^{b_i,b_j}|_{C_j}$. This reduces the analysis to the case of a pair cylinders in $\mathbb{R}^3.$ From \cite{bezdek1990maximum}, it follows that $d^j$ is convex and the boundary of $d_j$ in $p_x$ is a parabola; the intersection of such sets $d^j$ in $p_x$ is convex, and a parabola-sided polygon if bounded.
 \end{proof}
 

 Let $S_x(r)$ be the circle of radius $r$ in $p_x$ centered at $x$. 
\begin{lemma} \label{lem2}The vertices of $d_x$ are not closer to $S_x(1)$ than the vertices of a regular hexagon circumscribed about $S_x(1).$
\end{lemma}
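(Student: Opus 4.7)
Plan: I would analyze the vertex $v\in p_x$ of $d_x$ by writing out the two parabolic arcs meeting there, reduce the local configuration to a three unit cylinder packing in $\mathbb{R}^3$, and invoke the vertex bound of Bezdek and Kuperberg \cite{bezdek1990maximum}.

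First, I would write each parabolic arc explicitly. By Lemma \ref{lem1}, a vertex $v$ lies at the intersection of two arcs, coming from polycylinders $C_j$ and $C_k$. For $*\in\{j,k\}$, the arc is the equidistant locus in $p_x$ between the unit disk $C_i\cap p_x$ (centered at $x$) and the unit cylinder $C_*\cap F_x^{b_i,b_*}$ about a line $\ell_*\subset F_x^{b_i,b_*}$. Setting $D_*=d(x,\ell_*)\ge 2$, writing $\hat n_*$ for the unit vector from $x$ to the foot of the perpendicular onto $\ell_*$, and $\hat u_*$ for the unit direction of $\ell_*$ (so $\hat u_*\perp\hat n_*$), a direct expansion of $|y-x|=d(y,\ell_*)$ gives
\[
y\cdot\hat n_* \;=\; \frac{D_*}{2}\;-\;\frac{(y\cdot\hat u_*)^2}{2D_*}.
\]

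Second, I would observe that this equation depends only on the $p_x$-components of $\hat n_*,\hat u_*$ and on $D_*$, and that $\hat n_*,\hat u_*$ are already orthonormal in the 3-flat $F_x^{b_i,b_*}\supset p_x$. For each $*\in\{j,k\}$ one may therefore transport the data into a single common 3-flat $\tilde F\supset p_x$, producing orthonormal $(\tilde n_*,\tilde u_*)$ and a line $\tilde\ell_*$ at distance $D_*$ from $x$ whose parabolic arc in $p_x$ coincides with the original. This yields an auxiliary configuration of three unit cylinders $\tilde C_i,\tilde C_j,\tilde C_k$ in $\tilde F$, with $\tilde C_i$ about the axis through $x$ orthogonal to $p_x$. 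The pairs involving $\tilde C_i$ have disjoint interiors by construction (since $D_*\ge 2$); the pair $(\tilde C_j,\tilde C_k)$ requires using the polycylinder disjointness $d(a_j,a_k)\ge 2$, which forces $d(\ell_j,\ell_k)\ge 2$ in $\mathbb{R}^{n+2}$ (as $\ell_*\subset a_*$), and then propagates to $d(\tilde\ell_j,\tilde\ell_k)\ge 2$ in $\tilde F$ by an argument of the same flavor as Corollary \ref{dimcor}.

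Third, the Bezdek--Kuperberg vertex bound \cite{bezdek1990maximum}, applied to the auxiliary three cylinder packing in $\mathbb{R}^3$, states that the common Voronoi vertex in $p_x$ lies at distance at least $2/\sqrt 3$ from $\tilde\ell_i$. Since the construction preserves $|v-x|$, we obtain $|v-x|\ge 2/\sqrt 3$, i.e. $v$ is no closer to $S_x(1)$ than a vertex of a regular hexagon circumscribed about $S_x(1)$.

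The main obstacle will be the second step: propagating the disjointness $d(a_j,a_k)\ge 2$ from the ambient $\mathbb{R}^{n+2}$ to the chosen 3-flat $\tilde F$. Because orthogonal projection generally decreases distances, $d(\ell_j,\ell_k)\ge 2$ need not give $d(\tilde\ell_j,\tilde\ell_k)\ge 2$ after moving the $C_k$-data from $F_x^{b_i,b_k}$ into $\tilde F$. The key geometric input to overcome this is a monotonicity property of the parabolic arcs — namely, that the apex of the arc moves strictly outward from $x$ as the cylinder axis is tilted away from the direction perpendicular to $p_x$ — so that the ``all parallel'' hexagonal case is extremal, and an adaptive choice of $\tilde F$ can always realize a valid auxiliary 3D packing with the same vertex distance.
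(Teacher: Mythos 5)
Your reduction has a genuine gap exactly where you predicted it would: the passage from the pair $(C_j,C_k)$ to an auxiliary packing of three cylinders in a single $3$-flat. The foliation $\mathscr{F}^{b_i,b_j}$ is defined \emph{pairwise}; transporting the $C_j$-data and the $C_k$-data independently into a common $\tilde F$ preserves each arc's equation in $p_x$ and each distance $D_*=d(x,\ell_*)\ge 2$, but there is no reason the transported axes $\tilde\ell_j,\tilde\ell_k$ remain at distance $\ge 2$ from each other, so the hypothesis of the Bezdek--Kuperberg vertex bound (a genuine three-cylinder packing) need not hold. The monotonicity you invoke to repair this controls only the apex of a \emph{single} arc as its axis tilts; the vertex of $d_x$ is the intersection point of \emph{two} arcs, and its distance from $x$ depends on how the two parabolas meet, not on where either apex sits. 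As written, the extremality of the ``all parallel'' configuration for the two-arc intersection is an unproved claim of roughly the same difficulty as the lemma itself.

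The paper avoids all of this by discarding the cylinders entirely at the vertex. A vertex $v$ of $d_x$ is equidistant, at some distance $r\ge 0$, from three polycylinders $C_i,C_j,C_k$; dropping perpendiculars from $v$ to the three cores produces points $q_i,q_j,q_k$ with $|v-q_m|=r+1$, and the unit balls centered at $q_i,q_j,q_k$ lie inside the respective polycylinders, hence have pairwise disjoint interiors, hence pairwise center distances $\ge 2$. Projecting $v$ into the affine hull of the three centers and using that a triangle with all sides $\ge 2$ has smallest enclosing circle of radius $\ge 2/\sqrt{3}$ gives $r+1\ge 2/\sqrt{3}$, and since $v\in p_x$ one has $|v-x|=r+1\ge 2/\sqrt{3}$, which is the circumscribed-hexagon bound. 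This three-ball tangency argument is dimension-free and needs no compatibility between the pairwise foliations; I would recommend replacing your step two with it, or at minimum supplying a complete proof of the extremality claim on which your version rests.
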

\begin{proof}
A vertex of $d_x$ occurs where three or more polycylinders are equidistant, so the vertex is the center of a $(n+1)$-ball $B$ tangent to three polycylinders.  Thus $B$ is tangent to three disjoint unit $(n+2)$-balls $B_1$, $B_2$, $B_3$.  By projecting into the affine hull of the centers of $B_1$, $B_2$, $B_3$, it is immediate that the radius of $B$ is no less than $2/\sqrt{3} -1.$
\end{proof}

\begin{lemma} \label{lem3}
Let $y$ and $z$ be points on the circle $S_x(2/\sqrt{3})$.  If each of $y$ and $z$ is equidistant from $C_i$ and $C_j$, then the angle $yxz$ is smaller than or equal to $2\arccos (\sqrt{3} -1) = 85.8828\dots^\circ.$ 
\end{lemma}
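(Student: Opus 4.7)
The plan is to apply the foliation reduction of Lemma~\ref{lem1} to pass from the pair of polycylinders $(C_i,C_j)$ to a pair of unit cylinders in $\mathbb{R}^3$, and then verify the angle bound by explicit computation. Inside the leaf $F_x^{b_i,b_j}$, $C_i$ is a right circular unit cylinder with axis $a_i$ through $x$ perpendicular to $p_x$, and $C_j$ is a right circular unit cylinder with axis $\ell_j$; disjointness of interiors forces $d(a_i,\ell_j)\ge 2$. Because $p_x\perp a_i$, for $y\in p_x$ we have $\mathrm{dist}(y,C_i)=|xy|-1$, so the equidistance $\mathrm{dist}(y,C_i)=\mathrm{dist}(y,C_j)$ translates to $|xy|=\mathrm{dist}_{\mathbb{R}^3}(y,\ell_j)$, forcing $\mathrm{dist}_{\mathbb{R}^3}(y,\ell_j)=2/\sqrt 3$ and likewise for $z$.

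Next, I would place $a_i$ along the $z$-axis with $x$ at the origin and $p_x$ the $xy$-plane, and parametrize $\ell_j=\{(d,t\cos\theta,h+t\sin\theta):t\in\mathbb R\}$, where $d\ge 2$ is the length of the common perpendicular, $h$ the $z$-coordinate of its foot on $a_i$, and $\theta$ the tilt. Writing $y=\tfrac{2}{\sqrt 3}(\cos\alpha,\sin\alpha)$, the distance conditions reduce to
\[
\Bigl(\tfrac{2}{\sqrt 3}\sin\alpha\cos\theta-h\sin\theta\Bigr)^2 \;=\; d^2+h^2-\tfrac{4d}{\sqrt 3}\cos\alpha,
\]
with $\angle yxz=|\alpha-\beta|$, where $\alpha,\beta$ are the angular coordinates of $y,z$. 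Subtracting the two instances and applying sum-to-product identities yields
\[
\sin\!\Bigl(\tfrac{\alpha+\beta}{2}\Bigr)\Bigl(\tfrac{2}{\sqrt 3}\cos^2\theta\cos\!\Bigl(\tfrac{\alpha+\beta}{2}\Bigr)\cos\!\Bigl(\tfrac{\alpha-\beta}{2}\Bigr)-d\Bigr) \;=\; h\sin\theta\cos\theta\cos\!\Bigl(\tfrac{\alpha+\beta}{2}\Bigr).
\]

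Whenever $h\sin\theta\cos\theta=0$ one is in the \emph{symmetric regime} $\beta=-\alpha$ or the \emph{parallel regime} $\cos\theta=0$. The parallel regime is effectively planar, yielding $\cos\alpha=d\sqrt 3/4\ge\sqrt 3/2$ and $\angle yxz\le 60^\circ$. In the symmetric regime the original equation is quadratic in $\cos\alpha$; its relevant root is
\[
\cos\alpha=\tfrac{d-\sqrt{d^2\sin^2\theta+(4/3)\cos^4\theta}}{(2/\sqrt 3)\cos^2\theta}\;\;(h=0) \quad\text{or}\quad \cos\alpha=\tfrac{d-\sqrt{4/3-h^2}}{2/\sqrt 3}\;\;(\theta=0),
\]
and squaring and using $d\ge 2$ shows either quantity to be $\ge\sqrt 3-1$, with equality exactly at $d=2,\theta=0,h=0$: two perpendicular unit cylinders touching at $x$, realizing the extremal $\angle yxz=2\arccos(\sqrt 3-1)$.

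The main obstacle is the fully asymmetric regime $h\sin\theta\cos\theta\ne 0$ with $\alpha+\beta\ne 0$, where the defining relation squares to a genuine quartic in $\cos\alpha$ and a direct inequality is unwieldy. I would complete the argument either by a Lagrange-multiplier extremization on the compact (modulo isometries) semialgebraic parameter space, showing that any interior maximizer of $\angle yxz$ satisfies Euler--Lagrange conditions forcing a symmetry or the boundary $d=2$, or by a continuous deformation argument that decreases $|h|$ toward $0$ without decreasing $\angle yxz$ or violating $d\ge 2$, in either case reducing to the explicit bounds above.
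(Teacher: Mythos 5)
Your leaf-by-leaf reduction and the resulting equidistance equation are set up correctly, and the symmetric ($h\sin\theta\cos\theta=0$, $\beta=-\alpha$) and parallel ($\cos\theta=0$) regimes are computed properly. But the proof is not complete: the fully asymmetric regime is exactly where the content of the lemma lives, and there you only name two candidate strategies (a Lagrange-multiplier extremization over a parameter space whose compactness after quotienting you do not establish, and a deformation decreasing $|h|$ whose monotonicity you do not prove). Neither is carried out, and neither is obviously routine --- for instance, shrinking $|h|$ moves the constraint surfaces for $y$ and $z$ simultaneously, and you would need both angular coordinates to move favorably at once. As written this is a plan for the hard case, not a proof of it.

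The step your approach is missing, and the one the paper uses to dispose of all parameter values at once, is a separating hyperplane. Since $C_i$ and $C_j$ are convex with disjoint interiors, there is a hyperplane $H$ supporting $C_i$ and separating $\Int(C_i)$ from $\Int(C_j)$; such an $H$ is necessarily parallel to the core $a_i$ and hence meets $p_x$ in a line at distance $1$ from $x$. If $y$ lies on $S_x(2/\sqrt{3})$ and is equidistant from $C_i$ and $C_j$, then $d(y,C_j)=d(y,C_i)=2/\sqrt{3}-1$, and since any path from $y$ to $C_j$ must cross $H$, also $d(y,H)\le 2/\sqrt{3}-1$. With $u$ the unit normal to $H\cap p_x$ pointing from $x$ toward $H$, this gives $\langle y-x,u\rangle\ge 1-(2/\sqrt{3}-1)=2-2/\sqrt{3}$, so the angle between $y-x$ and $u$ is at most $\arccos\bigl((2-2/\sqrt{3})\cdot\tfrac{\sqrt{3}}{2}\bigr)=\arccos(\sqrt{3}-1)$. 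The same holds for $z$, and the claimed bound on the angle $yxz$ follows from the triangle inequality for angles. This argument is uniform in $(d,\theta,h)$ and should replace your unfinished case analysis; your explicit computation in the symmetric case remains useful only as a check that the bound is attained by two perpendicular tangent cylinders.
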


\begin{proof} Following \cite{bezdek1990maximum, kusner2013upper}, the existence of a supporting hyperplane of $C_i$ that separates $\Int(C_i)$ from $\Int(C_j)$ suffices.
\end{proof}



In \cite{bezdek1990maximum}, it is shown that planar objects satisfying Lemmas \ref{lem1}, \ref{lem2} and \ref{lem3} have area no less than $\sqrt{12}.$ As the bound holds for all Dirichlet slices, it follows that $\delta^+(\mathbb{D}^2\times \mathbb{R}^n) \le \pi/\sqrt{12}$ in $\mathbb{R}^{n+2}.$  The product of the dense disk packing in the plane with $\mathbb{R}^n$ gives a polycylinder packing in $\mathbb{R}^{n+2}$ that achieves this density.  Combining this with the result of Thue \cite{thue1910densest} for $n=0$ and the result of Bezdek and Kuperberg \cite{bezdek1990maximum} for $n=1$, it follows that

\begin{theorem}
$\delta^+(\mathbb{D}^2 \times \mathbb{R}^n) \le \pi/\sqrt{12} $ for all natural numbers $n.$
\end{theorem}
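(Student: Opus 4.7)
The plan is to reduce the $(n+2)$-dimensional density estimate to a family of planar area estimates on Dirichlet slices, and then invoke the Bezdek–Kuperberg planar lower bound of $\sqrt{12}$.

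First, I would fix an arbitrary packing $\mathscr{C}=\{C_i\}_{i\in I}$ of $\mathbb{R}^{n+2}$ and recall that the Dirichlet cells $\{D_i\}$ partition $\mathbb{R}^{n+2}$ with $C_i\subset D_i$. Hence it suffices to bound the ratio of $C_i$ to $D_i$ uniformly in $i$. For a fixed $i$, I would use the coordinates coming from the core $a_i$: writing a point of $\mathbb{R}^{n+2}$ as $(x,y)$ with $x\in a_i$ and $y\in p_x$ the orthogonal $2$-flat, Fubini expresses the volume of any bounded truncation of $D_i$ as an integral $\int \Area(d_x)\,dx$ of the Dirichlet-slice areas over the core, and the corresponding volume of $C_i\cap D_i$ as $\int \Area(\mathbb{D}^2)\,dx = \pi\cdot\mathrm{length}$. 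The density of $C_i$ inside $D_i$ is therefore controlled as soon as $\Area(d_x)$ is bounded from below uniformly.

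Next, I would apply the three slice lemmas to each $d_x$: Lemma \ref{lem1} says $d_x$ is a convex parabola-sided region, Lemma \ref{lem2} says each vertex lies outside the open disk of radius $2/\sqrt{3}$ centered at $x$, and Lemma \ref{lem3} bounds the central angle subtended by consecutive vertices on $S_x(2/\sqrt{3})$ by $2\arccos(\sqrt{3}-1)$. These are precisely the three planar hypotheses that Bezdek and Kuperberg \cite{bezdek1990maximum} verified for pairs of parallel cylinders in $\mathbb{R}^3$, and on the basis of which they established the lower bound $\Area(d_x)\ge \sqrt{12}$, with equality attained by the regular hexagon circumscribed about $S_x(1)$. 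The main obstacle — essentially the content of the previous sections — is ensuring that even though the polycylinder pairs live in $\mathbb{R}^{n+2}$ and are generically skew, the parallel-dimension lower bound from Corollary \ref{dimcor} produces, for each pair $(C_i,C_j)$, a common parallel direction and a product foliation $\mathscr{F}^{b_i,b_j}$ whose $3$-flat leaves contain $p_x$ and pull the pairwise distance geometry back to the $\mathbb{R}^3$ setting; this is exactly what validates the application of Lemmas \ref{lem1}–\ref{lem3} in the first place.

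Finally, combining the slice bound $\Area(d_x)\ge\sqrt{12}$ with the cross-sectional area $\pi$ of $C_i$ gives $\Vol(C_i\cap K)/\Vol(D_i\cap K)\le \pi/\sqrt{12}$ for any sufficiently large convex truncation $K$ up to boundary corrections of lower order in the diameter of $K$. Summing over the partition $\{D_i\}$ and passing to the limit $r\to\infty$ in $r\mathbb{B}^{n+2}$ yields $\delta^+(\mathscr{C})\le\pi/\sqrt{12}$. The matching lower bound is realized by the product of the hexagonal disk packing in $\mathbb{R}^2$ with $\mathbb{R}^n$, and the cases $n=0,1$ are already covered by \cite{thue1910densest} and \cite{bezdek1990maximum} respectively, so the sharp bound $\delta^+(\mathbb{D}^2\times\mathbb{R}^n)=\pi/\sqrt{12}$ holds for every $n\ge 0$.
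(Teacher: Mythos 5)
Your proposal is correct and follows essentially the same route as the paper: partition by Dirichlet cells, slice each cell by the planes $p_x$ orthogonal to the core, verify via the foliation/transversality argument that each Dirichlet slice satisfies Lemmas \ref{lem1}--\ref{lem3}, and invoke the Bezdek--Kuperberg lower bound $\Area(d_x)\ge\sqrt{12}$ to conclude, with the product packing giving sharpness. The only difference is that you spell out the Fubini step and the boundary corrections, which the paper leaves implicit.
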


\section{Acknowledgments}

Thanks to T. Hales, W. Kuperberg and R. Kusner. The author was supported by Austrian Science Fund (FWF) Project 5503 and National Science Foundation (NSF) Grant No. 1104102.

\bibliography{ThomTom}
\end{document}